\date{}
\newcommand{\ra}{\rightarrow}
\newcommand{\lra}{\longrightarrow}
\newcommand{\mc}{\mathcal}
\theoremstyle{plain}
\newtheorem{theorem}{Theorem}
\newtheorem{prop}[theorem]{Proposition}
\newtheorem{corollary}[theorem]{Corollary}
\newtheorem{lemma}[theorem]{Lemma}
\theoremstyle{definition}
\newtheorem{remark}[theorem]{Remark}
\numberwithin{equation}{section}
\begin{document}

\title[Corrigendum: Degenerate Sklyanin algebras]{Corrigendum to ``Degenerate Sklyanin algebras and Generalized Twisted Homogeneous Coordinate rings",\\ J. Algebra 322 (2009) 2508-2527}
\author{Chelsea Walton}

\address{Department of Mathematics, University of Washington, Seattle, WA 98103.}
\email{notlaw@math.washington.edu}
\date{}
%\keywords{noncommutative algebraic geometry, degenerate sklyanin algebra, point module, twisted homogeneous coordinate ring}
%\subjclass[2000]{14A22, 16S37, 16S38, 16W50}
%\thanks{The author is supported by the NSF: grant DMS-1102548.}

%%%%%%%%%%%%%%%%%%%%%%%%%%%%%%%%%%%%%%%%

\bibliographystyle{abbrv}

\maketitle

%\tableofcontents

\begin{abstract}
There is an error in the computation of the truncated point schemes $V_d$ of the degenerate Sklyanin algebra $S(1,1,1)$. We are grateful to S. Paul Smith for pointing out that $V_d$ is larger than was claimed in Proposition 3.13. All 2 or 3 digit references are to the above paper, while 1 digit references are to the results in this corrigendum. We provide a description of the correct $V_d$ in Proposition \ref{prop:Vd} below. Results about the corresponding point parameter ring $B$ associated to the schemes $\{V_d\}_{d \geq 1}$ are given afterward. 
\end{abstract}

\section{Corrections}

The main error in the above paper is to the statement of Lemma 3.10. Before stating the correct version, we need some notation.

\medskip

\noindent {\it Notation.} Given $\zeta = e^{2 \pi i /3}$, let $p_a:=[1:1:1]$, $p_b:=[1:\zeta:\zeta^2]$, and $p_c:=[1:\zeta^2:\zeta]$. Also, let $\check{\mathbb{P}}_A^1 := \mathbb{P}_A^1 \setminus \{p_b, p_c\}$, $\check{\mathbb{P}}_B^1 := \mathbb{P}_B^1 \setminus \{p_a, p_c\}$, and
$\check{\mathbb{P}}_C^1 := \mathbb{P}_C^1 \setminus \{p_a,p_b\}$.

\medskip

We also require the following more precise version of Lemma 3.9; the original result is correct though there is a slight change in the proof as given below.

\begin{lemma}\label{lem:Lemma3.9} (Correction of Lemma 3.9) 
Let $p=(p_0, \dots, p_{d-2}) \in V_{d-1}$ with $p_{d-2} \in \check{\mathbb{P}}_A^1$, $\check{\mathbb{P}}_B^1,$ or $\check{\mathbb{P}}_C^1$. If $p'=(p,p_{d-1}) \in V_d$, then $p_{d-1}$ = $p_a$, $p_b$,  or $p_c$ respectively. 
\end{lemma}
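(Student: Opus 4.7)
The plan is to exploit the fact that $V_d \subset (\mathbb{P}^2)^d$ is cut out by the bilinear multilinearizations $\mc{M}(f)$ of the three defining relations $f$ of $S(1,1,1)$, applied to each consecutive pair of coordinates. Thus $(p, p_{d-1}) \in V_d$ precisely when $\mc{M}(f)(p_{d-2}, p_{d-1}) = 0$ for each defining relation $f$, with $p \in V_{d-1}$ holding by hypothesis. With $p_{d-2}$ held fixed, this reduces to a homogeneous linear system in the coordinates of $p_{d-1} \in \mathbb{P}^2$, encoded by a $3 \times 3$ matrix $M(p_{d-2})$ whose rows are the three linear forms obtained by contracting each bilinear form against $p_{d-2}$.

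For the case $p_{d-2} \in \check{\mathbb{P}}_A^1$, I would parametrize $\mathbb{P}_A^1$ explicitly, substitute into the three bilinear forms coming from the $S(1,1,1)$-relations (as recorded earlier in Section 3), and analyze $M(p_{d-2})$ by direct computation. The goal is to show that $\rank M(p_{d-2}) = 2$ for every $p_{d-2} \in \check{\mathbb{P}}_A^1$, with its one-dimensional kernel generated by the homogeneous coordinates of $p_a$. The role of excising $\{p_b, p_c\}$ from $\mathbb{P}_A^1$ is precisely to preserve this rank: at the two omitted points $M$ drops to rank $\le 1$ and the fiber of admissible $p_{d-1}$ becomes at least a full line, which is exactly the loophole that made the original phrasing of Lemma 3.9 imprecise.

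The cases $p_{d-2} \in \check{\mathbb{P}}_B^1$ and $\check{\mathbb{P}}_C^1$ would then be handled by invoking the $\Z/3$-cyclic symmetry of $S(1,1,1)$ permuting the generators $x,y,z$: this symmetry carries the triple $(p_a, p_b, p_c)$ to itself and the three lines $\mathbb{P}_A^1, \mathbb{P}_B^1, \mathbb{P}_C^1$ to each other in compatible fashion, so the computation of the second paragraph transports verbatim and produces $p_b$ or $p_c$ in place of $p_a$.

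The main obstacle is the rank computation in the second paragraph: one has to verify by hand that the $3 \times 3$ determinant of $M(p_{d-2})$ vanishes identically on $\mathbb{P}_A^1$, that the locus where every $2 \times 2$ minor of $M(p_{d-2})$ vanishes cuts out exactly $\{p_b, p_c\}$ inside $\mathbb{P}_A^1$, and that the kernel vector on the complement is $p_a$. This is precisely the step where the original argument for Lemma 3.9 elided a case, so it must be carried out explicitly here rather than dismissed as generic behavior.
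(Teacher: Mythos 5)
Your overall strategy is sound, and it is in substance a cleaner repackaging of what the paper does: the paper's proof simply re-runs the computation from the original Lemma 3.9 (fixing a typographical slip in the case $x_{d-2}=0$, where the system $f_{d-2}=g_{d-2}=h_{d-2}=0$, $y_{d-2}^3+z_{d-2}^3=0$, $x_{d-1}^3+y_{d-1}^3+z_{d-1}^3-3x_{d-1}y_{d-1}z_{d-1}=0$ forces a contradiction), whereas you organize the same bilinear system as a kernel computation. The facts you defer do check out: with the relations $yz+zy+x^2$, $zx+xz+y^2$, $xy+yx+z^2$ of $S(1,1,1)$, the matrix is $M(p)=\left(\begin{smallmatrix} x & z & y\\ z & y & x\\ y & x & z\end{smallmatrix}\right)$ for $p=[x:y:z]$; its determinant is $3xyz-(x^3+y^3+z^3)$, which vanishes identically on $E_{111}$ (so on all of $\mathbb{P}_A^1$, not just generically); the rank-one locus, cut out by $x^2=yz$, $y^2=xz$, $z^2=xy$, is exactly $\{p_a,p_b,p_c\}$; and every row of $M(p)$ pairs with $(1,1,1)$ to give $x+y+z$, so for $p_{d-2}\in\check{\mathbb{P}}_A^1$ the kernel is one-dimensional and spanned by $p_a$. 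So the $\check{\mathbb{P}}_A^1$ case works exactly as you outline, granting that you still owe the explicit computation you flag as the ``main obstacle.''

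The one step that fails as written is the transport to the other two cases: the $\Z/3$ symmetry permuting the generators $x\mapsto y\mapsto z\mapsto x$ does \emph{not} permute the three lines. It preserves each of $\mathbb{P}_A^1$, $\mathbb{P}_B^1$, $\mathbb{P}_C^1$ and fixes each of $p_a,p_b,p_c$ individually (e.g. $[1:\zeta:\zeta^2]\mapsto[\zeta:\zeta^2:1]=[1:\zeta:\zeta^2]$), so it cannot carry the $\check{\mathbb{P}}_A^1$ computation to $\check{\mathbb{P}}_B^1$ or $\check{\mathbb{P}}_C^1$. The symmetry you want is the diagonal automorphism $x\mapsto x$, $y\mapsto \zeta y$, $z\mapsto \zeta^2 z$, which rescales each relation by a cube root of unity and cyclically permutes the lines and the points $p_a,p_b,p_c$ compatibly; alternatively, skip symmetry altogether and note that the three entries of $M(p)$ applied to $(1,\zeta,\zeta^2)$ (resp. $(1,\zeta^2,\zeta)$) are unit multiples of $x+\zeta^2 y+\zeta z$ (resp. $x+\zeta y+\zeta^2 z$), so the identical rank argument yields $p_{d-1}=p_b$ on $\check{\mathbb{P}}_B^1$ and $p_{d-1}=p_c$ on $\check{\mathbb{P}}_C^1$ directly. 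With that repair (and the determinant/minor computation actually written out), your proof is complete and consistent with the corrected Figure 3.1 labeling of $\mathbb{P}_B^1$ and $\mathbb{P}_C^1$.
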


\begin{proof}
The proof follows from that of Lemma 3.9, except that there is a typographical error in the case when $p_{d-2} = [0: y_{d-2} : z_{d-2}]$. Here, we require that $(p_{d-2},p_{d-1})$ satisfies the system of equations:
\begin{center}
$\begin{array}{r}
f_{d-2}=g_{d-2}=h_{d-2}=0, \\
y_{d-2}^3 + z_{d-2}^3 = 0,\\
x_{d-1}^3 + y_{d-1}^3 + z_{d-1}^3 - 3 x_{d-1} y_{d-1} z_{d-1} = 0.
\end{array}$
\end{center}
This implies that either $y_{d-2} = z_{d-2} = 0$ or $x_{d-1} = y_{d-1} = z_{d-1} = 0$, which produces a contradiction.
\end{proof}

Now the correct version of Lemma 3.10 is provided below. The present version is slightly weaker than the original result, where it was claimed that $p_{d-1} \in \check{\mathbb{P}_{*}^1}$ instead of $p_{d-1} \in \mathbb{P}_{*}^1$. Here, $\mathbb{P}_{*}^1$ denotes either $\mathbb{P}_{A}^1$, $\mathbb{P}_{B}^1$, or $\mathbb{P}_{C}^1$.

\begin{lemma} \label{lem:Lemma3.10} (Correction of Lemma 3.10) 
Let $p=(p_0, \dots, p_{d-2}) \in V_{d-1}$ with $p_{d-2}$ = $p_a$, $p_b$,  or $p_c$.  If $p'=(p,p_{d-1}) \in V_d$, then $p_{d-1} \in \mathbb{P}_A^1$, $\mathbb{P}_B^1,$ or $\mathbb{P}_C^1$ respectively. 
\end{lemma}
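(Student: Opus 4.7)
The plan is to substitute each of the three special points into the multilinearized cubic relations of $S(1,1,1)$ and observe that the three equations collapse to a single linear condition, which cuts out the appropriate line $\mathbb{P}^1_*$ in $\mathbb{P}^2$. Since the constraints in $V_d$ depending on the last two coordinates $(p_{d-2}, p_{d-1})$ are exactly those multilinearizations, any condition on $p_{d-1}$ forced by $p \in V_{d-1}$ already involves only $p_0, \dots, p_{d-2}$, so the only new constraints are the three evaluated at $(p_{d-2}, p_{d-1})$.

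First, I recall the multilinearizations of the three defining relations of $S(1,1,1)$ at positions $(d-2, d-1)$, which have the form
\[
\begin{array}{r}
f_{d-2}: x_{d-2}x_{d-1} + y_{d-2}z_{d-1} + z_{d-2}y_{d-1} = 0,\\
g_{d-2}: y_{d-2}y_{d-1} + z_{d-2}x_{d-1} + x_{d-2}z_{d-1} = 0,\\
h_{d-2}: z_{d-2}z_{d-1} + x_{d-2}y_{d-1} + y_{d-2}x_{d-1} = 0,
\end{array}
\]
(as derived in the original paper). I would then treat the case $p_{d-2} = p_a = [1{:}1{:}1]$ first: direct substitution shows that each of $f_{d-2}$, $g_{d-2}$, $h_{d-2}$ becomes the same equation $x_{d-1} + y_{d-1} + z_{d-1} = 0$. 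This linear equation cuts out a line in $\mathbb{P}^2$; I would verify that this line passes through both $p_b$ and $p_c$ (since $1 + \zeta + \zeta^2 = 0$), which identifies it as the line $\mathbb{P}^1_A$ in the sense of the paper's notation (consistent with $\check{\mathbb{P}}^1_A = \mathbb{P}^1_A \setminus \{p_b, p_c\}$).

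For the remaining cases, I would substitute $p_{d-2} = p_b = [1{:}\zeta{:}\zeta^2]$ (respectively $p_c = [1{:}\zeta^2{:}\zeta]$) into the same three multilinearizations. A short calculation shows that $f_{d-2}$, $g_{d-2}$, $h_{d-2}$ become scalar multiples of a single linear form; for $p_b$ this form is $\zeta\, x_{d-1} + y_{d-1} + \zeta^2\, z_{d-1} = 0$, and one checks $p_a, p_c$ satisfy it, identifying the line as $\mathbb{P}^1_B$. The $p_c$ case is completely analogous, yielding $\mathbb{P}^1_C$.

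I do not expect a serious obstacle here: the computation is purely linear-algebraic once the special points are plugged in, and the collapse of three equations to one is forced by the fact that $p_a$, $p_b$, $p_c$ are exactly the fixed points of the natural $\mathbb{Z}/3$-action that cyclically permutes the three defining relations. The only care needed is to keep track of the labeling convention so as to correctly match each special point $p_a$, $p_b$, $p_c$ with the corresponding line $\mathbb{P}^1_A$, $\mathbb{P}^1_B$, $\mathbb{P}^1_C$.
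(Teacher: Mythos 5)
Your proposal is correct and follows essentially the same route as the paper: the paper's proof (via the original Lemma 3.10 with the corrected function $\theta$) amounts to exactly your computation, since $x_{d-1}=\theta(y_{d-1},z_{d-1})$ is precisely the single linear equation $x+y+z=0$, $x+\zeta^2 y+\zeta z=0$, or $x+\zeta y+\zeta^2 z=0$ to which the three multilinearized relations collapse at $p_a$, $p_b$, $p_c$. Your identification of the lines via the incidences with the other two special points matches the corrected labeling of $\mathbb{P}^1_B$ and $\mathbb{P}^1_C$, so the bookkeeping is right as well.
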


\begin{proof} The proof follows from that of Lemma 3.10 with the exception that there is a typographical error in the definition of the function $\theta$; it should be defined as:
\begin{center}
$\theta(y_{d-1}, z_{d-1}) = \left\{\begin{array}{ll}
                      -(y_{d-1} + z_{d-1}) & \text{if~} p_{d-2} = p_a,\\
                      -(\zeta^2 y_{d-1} + \zeta z_{d-1}) & \text{if~} p_{d-2} = p_b,\\
                      -(\zeta y_{d-1} + \zeta^2 z_{d-1}) & \text{if~} p_{d-2} = p_c.
                      \end{array} \right.$
                      \end{center}
\vspace{-.2in}

\end{proof}

\begin{remark} There are two further minor typographical corrections to the paper.
\begin{enumerate}
\item (Correction of Figure 3.1)
The definition of the projective lines $\mathbb{P}_B^1$ and $\mathbb{P}_C^1$ should be interchanged. More precisely, the curve $E_{111}$ is the union of three projective lines: 
{\small
\[
\begin{array}{l}
\mathbb{P}^1_A: x+y+z=0,\\
\mathbb{P}^1_B: x+\zeta^2 y+\zeta z=0,\\
\mathbb{P}^1_C: x+\zeta y + \zeta^2 z=0.
\end{array}\]
}
\item (Correction to Corollary 4.10) 
The numbers 57 and 63 should be replaced by 24 and 18 respectively.
\end{enumerate}
\end{remark}

\section{Consequences} 

The main consequence of weakening Lemma 3.10 to  Lemma 3  is that the truncated point schemes $\{V_d\}_{d \geq 1}$ of $S=S(1,1,1)$  are strictly larger than the truncated point schemes computed in Proposition 3.13 for $d \geq 4$. We discuss such results in $\S$2.1 below. Furthermore, the corresponding point parameter ring associated to the correct point scheme data of $S$ is studied in $\S$2.2.

\medskip

\noindent {\it Notation.}  (i) Let $W_d := \bigcup_{i=1}^6 W_{d,i}$ with $W_{d,i}$ defined in Proposition 3.13. 

\noindent (ii) Let $B:= \bigoplus_{d \geq 0} H^0(V_d, \mc{O}_{V_d}({\bf 1}))$ be the point parameter ring of $S(1,1,1)$ as in Definition 1.8. 

\noindent (iii) Likewise let $P:= \bigoplus_{d \geq 0} H^0(W_d, \mc{O}_{W_d}({\bf 1}))$ be the point parameter ring associated to the schemes $\{W_d\}_{d \geq 1}$.

\medskip

The results of $\S$4 of the paper are still correct; we describe the ring $P$, and we show that it is a factor of $S(1,1,1)$. Unfortunately, the ring $P$ is not equal to the point parameter ring $B$ of $S(1,1,1)$. More precisely, the following corrections should be made.

\begin{remark} \label{rmk:newVd} 
(1) The scheme $V_d$ should be replaced by $W_d$ in Theorem 1.7, in Proposition 3.13, in Remark 3.14, and in all $\S$4 after Definition 4.1. 
\smallskip

\noindent (2) The ring $B$ should be replaced by $P$ in $\S$1 after Definition 1.8, and in all $\S$4 with the exception of the second paragraph.
\end{remark}

\subsection{On the truncated point schemes $\{V_d\}_{d \geq 1}$}
We provide a description of the truncated point schemes $\{V_d\}_{d \geq 1}$ as follows. 

\medskip

\noindent {\it Notation.} Let $\{V_{d,i}\}_{i \in I_d}$ denote the $|I_d|$ irreducible components of the $d^{\text{th}}$ truncated point scheme $V_d$.

\begin{prop} \label{prop:Vd} (Description of $V_d$)
For $d \geq 2$, the  length $d$ truncated point scheme $V_d$ is realized as the union of length $d$ paths of the quiver $Q$ below. 
With $d=2$, for example, the path $\mathbb{P}^1_A \lra p_a$ corresponds to the component $\mathbb{P}^1_A \times p_a$ of $V_2$. 
\end{prop}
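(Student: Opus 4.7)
The plan is to prove Proposition \ref{prop:Vd} by induction on $d$, establishing both inclusions between $V_d$ and the union of length-$d$ paths of $Q$. The base case $d=2$ is immediate from the corrected Lemmas \ref{lem:Lemma3.9} and \ref{lem:Lemma3.10} applied to the decomposition $V_1 = \mathbb{P}^1_A \cup \mathbb{P}^1_B \cup \mathbb{P}^1_C$; it produces exactly the length-$2$ components $\mathbb{P}^1_{*} \times \{p_{*}\}$ and $\{p_{*}\} \times \mathbb{P}^1_{*}$ listed as example paths of $Q$.

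For the inductive step in the direction $V_d \subseteq \bigcup\text{(paths)}$, I would take $(p_0, \dots, p_{d-1}) \in V_d$ and note that its truncation $(p_0, \dots, p_{d-2}) \in V_{d-1}$ is traced by some length-$(d-1)$ path $v_0 \to \cdots \to v_{d-2}$ of $Q$. One then shows that $p_{d-1}$ must lie in the target of some arrow out of $v_{d-2}$, appealing to Lemmas \ref{lem:Lemma3.9} and \ref{lem:Lemma3.10}. Concretely: if $v_{d-2} = \mathbb{P}^1_{*}$ and $p_{d-2} \in \check{\mathbb{P}}^1_{*}$, then Lemma \ref{lem:Lemma3.9} forces $p_{d-1}=p_{*}$ (the arrow $\mathbb{P}^1_{*} \to p_{*}$); if instead $p_{d-2}$ is one of the two distinguished intersection points lying on $\mathbb{P}^1_{*}$, one refactors the path to pass through that point-vertex. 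If $v_{d-2} = p_{*}$, then Lemma \ref{lem:Lemma3.10} confines $p_{d-1}$ to $\mathbb{P}^1_{*}$, giving either an arrow to the line-vertex or, when $p_{d-1}$ is itself an intersection point on $\mathbb{P}^1_{*}$, an arrow to a point-vertex.

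For the reverse inclusion, I would verify directly that each length-$d$ path of $Q$ yields points of $V_d$. Since $V_d$ is cut out by the equations $f_i = g_i = h_i = 0$ for $0 \leq i \leq d-2$, with each triple depending only on the consecutive pair $(p_i,p_{i+1})$, the verification reduces to checking that every consecutive pair lies in $V_2$. The arrows of $Q$ have been designed precisely so that each arrow corresponds to an irreducible component of $V_2$; hence any length-$d$ path of $Q$ produces a product subscheme whose consecutive coordinate pairs all lie in $V_2$, placing the whole sequence in $V_d$.

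The main obstacle will be the delicate case analysis at the distinguished intersection points. Because $p_a \in \mathbb{P}^1_B \cap \mathbb{P}^1_C$ (and similarly for $p_b$ and $p_c$), a path that enters a line-vertex and lands on such an intersection point induces further branching on the next step via Lemma \ref{lem:Lemma3.10}, producing arrows in $Q$ between distinguished-point vertices. Ensuring that $Q$ encodes each such transition---without double-counting or missing a component---and checking via a closure argument that the generic components identified by Lemma \ref{lem:Lemma3.9} extend consistently across these intersection points is where the most careful bookkeeping is required.
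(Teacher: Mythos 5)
This is essentially the paper's own argument: induction on $d$, base case $V_2=W_2$, and the corrected Lemmas \ref{lem:Lemma3.9} and \ref{lem:Lemma3.10} driving the forward inclusion via the same case split (generic point of a line versus one of the intersection points $p_a,p_b,p_c$). For the reverse inclusion the paper again routes through those two lemmas, whereas you instead use that the defining equations $f_i=g_i=h_i=0$ couple only consecutive coordinates, so it suffices that each arrow of $Q$ gives a pair inside $V_2$; that is a sound (arguably cleaner) justification, with the one small caveat that the six point-to-point arrows yield pairs merely \emph{contained in} components of $V_2$ (e.g.\ $(p_a,p_b)\in p_a\times\mathbb{P}^1_A$), not irreducible components of $V_2$ as you state.
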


{\small
\[
\xymatrix@-1pc{
&&                 \mathbb{P}_A^1 \ar@/^1pc/[d]   &&\\
&&                           p_a \ar@/^1pc/[u]     \ar@/^/[rdd] \ar@/^/[ldd]              &&\\\\
& p_b   \ar@/^1pc/[ld]      \ar@/^/[ruu] \ar@/^/[rr]               & &                 p_c  \ar@/^1pc/[rd]  \ar@/^/[luu] \ar@/^/[ll]&\\
\mathbb{P}_B^1 \ar@/^1pc/[ru]   &&&&   \mathbb{P}_C^1 \ar@/^1pc/[lu]
}
\] 
\begin{center}\mbox{\text{The quiver $Q$}}\end{center}
}

\medskip

\begin{proof}
We proceed by induction.  Considering the $d=2$ case, Lemma 3.12 still holds so $V_2 = W_2$, the union of the irreducible components:

{\small
\[
\begin{array}{lllll}
\mathbb{P}_A^1 \times p_a, && \mathbb{P}_B^1 \times p_b, && \mathbb{P}_C^1 \times p_c\\
p_a \times \mathbb{P}_A^1, && p_b \times \mathbb{P}_B^1, &&p_c \times \mathbb{P}_C^1.\\
\end{array}
\]
}

\noindent One can see these components correspond to length 2 paths of the quiver Q. Conversely, any length 2 path of $Q$  corresponds to a component that lies in $V_2$. 

We assume the proposition holds for $V_{d-1}$, and
recall that Lemmas 2 and 3 provide the recipe to build $V_d$ from $V_{d-1}$. Take a point $(p_0, \dots, p_{d-2}) \in V_{d-1,i}$, where the irreducible component $V_{d-1,i}$ of $V_{d-1}$ corresponds to a length $d-1$ path of $Q$. Let $\{V_{d,ij}\}_{j \in J}$ be the set of $|J|$ irreducible components of $V_d$ with $$(p_0, \dots, p_{d-2}, p_{d-1}) \in V_{d,ij} \subseteq V_d$$ for some  $p_{d-1} \in \mathbb{P}^2$.
There are two cases to consider. 

\smallskip

\noindent \underline{Case 1}: We have that $(p_{d-3},p_{d-2})$ lies in  one of the following products:
\[
\begin{array}{lcclccl}
\mathbb{P}_A^1 \times p_a, &&&\mathbb{P}_B^1 \times p_b,  &&&\mathbb{P}_C^1 \times p_c ,\\
p_a \times \check{\mathbb{P}}_A^1, &&&p_b \times \check{\mathbb{P}}_B^1, &&& p_c \times \check{\mathbb{P}}_C^1.
\end{array}
\]
For the first three choices,  Lemma \ref{lem:Lemma3.10} implies that $pr_d(V_{d,ij})$ = $\mathbb{P}_A^1$, $\mathbb{P}_B^1$, or $\mathbb{P}_C^1$,  respectively. For the second three choices, $p_{d-2}$ belongs to $\check{\mathbb{P}}_A^1$, $\check{\mathbb{P}}_B^1$, or $\check{\mathbb{P}}_C^1$, and Lemma \ref{lem:Lemma3.9} implies that $pr_d(V_{d,ij}) = p_a$, $p_b$, or $p_c$,  respectively. We conclude by induction that  the component $V_{d,ij}$ yields a length $d$ path of $Q$. 

\smallskip

\noindent \underline{Case 2}: We have that $(p_{d-3},p_{d-2})$ is equal to one of the following points:
\[
\begin{array}{cccc}
p_a \times p_b, &&& p_a \times p_c,\\
p_b \times p_a, &&& p_b \times p_c,\\
p_c \times p_a, &&& p_c \times p_b.
\end{array}
\]
Now  Lemma \ref{lem:Lemma3.10} implies that: 
\[
pr_d(V_{d,ij}) = 
\begin{cases}
\mathbb{P}_A^1  & \text{~if~} p_{d-2}  = p_a,\\
\mathbb{P}_B^1  & \text{~if~}p_{d-2}  =  p_b,\\
\mathbb{P}_C^1  & \text{~if~} p_{d-2}  = p_c.\\
\end{cases}
\]
Again we have that in this case, the component $V_{d,ij}$ yields a length $d$ path of $Q$.

\medskip

Conversely (in either case), let $\mc{P}$ be a length $d$ path of $Q$. Then, by induction, the embedded length $d-1$ path $\mc{P}'$ ending at the $d-1^{\text{st}}$ vertex $v'$ of $\mc{P}$ yields a component $X'$ of $V_{d-1}$. Say $v$ is the $d^{\text{th}}$ vertex of $\mc{P}$. If $v'$ is equal to $\mathbb{P}_A^1$, $\mathbb{P}_B^1$, or $\mathbb{P}_C^1$, then $v$ must be $p_a$, $p_b$, or $p_c$ by the definition of $Q$, respectively. Lemma 2 then ensures that $\mc{P}$ yields a component $X$ of $V_d$ so that $pr_{1\dots d-1}(X) = X'$. On the other hand, if $v'$ is equal to $p_a$, $p_b$, or $p_c$, then $v$ lies in $\mathbb{P}_A^1$, $\mathbb{P}_B^1$, or $\mathbb{P}_C^1$, respectively. Likewise, Lemma 3 implies that $\mc{P}$ yields a component $X$ of $V_d$ so that $pr_{1\dots d-1}(X) = X'$.
\end{proof}

\begin{corollary} \label{cor:VdWd} We have that $V_d = W_d$ for $d=1,2,3$, and that $V_d \supsetneq W_d$ for $d \geq 4$. 
\end{corollary}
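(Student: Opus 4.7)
The plan is to invoke Proposition \ref{prop:Vd}, which identifies $V_d$ with the union of components arising from length $d$ paths of $Q$, and to compare these against the six alternating components $W_{d,i}$ of Proposition 3.13. The small cases are immediate: $V_1 = E_{111} = W_1$, and $V_2 = W_2$ was already observed in the proof of Proposition \ref{prop:Vd}. The substantive content lies in the $d=3$ and $d \geq 4$ cases.

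For $d=3$, I would enumerate the length 3 paths of $Q$ and check that every non-alternating path yields a subvariety absorbed into one of the six alternating components $W_{3,i}$. The key incidences, verified via $1 + \zeta + \zeta^2 = 0$, are $p_a \in \mathbb{P}_B^1 \cap \mathbb{P}_C^1$, $p_b \in \mathbb{P}_A^1 \cap \mathbb{P}_C^1$, and $p_c \in \mathbb{P}_A^1 \cap \mathbb{P}_B^1$. Thus, for example, a path $p_x \to p_y \to \mathbb{P}_Y^1$ produces the $1$-dimensional subvariety $\{p_x\} \times \{p_y\} \times \mathbb{P}_Y^1$, which sits inside $\mathbb{P}_Y^1 \times \{p_y\} \times \mathbb{P}_Y^1 = W_{3,i}$ because $p_x \in \mathbb{P}_Y^1$; the same absorption occurs for paths of types $p_x \to p_y \to p_z$ and $\mathbb{P}_X^1 \to p_x \to p_y$, always using one of the three incidences above.

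For the strict containment at $d \geq 4$, the strategy is to exhibit a single point of $V_d$ lying outside $W_d$. The witness at $d=4$ is the point $(p_a,p_b,p_c,p_a)$, arising from the length 4 path $p_a \to p_b \to p_c \to p_a$ in $Q$. To see that it escapes $W_4$, I would note that any alternating component of the shape $\mathbb{P}_*^1 \times p_? \times \mathbb{P}_*^1 \times p_?$ has equal second and fourth entries, which rules it out because $p_b \neq p_a$, while any component of the shape $p_? \times \mathbb{P}_*^1 \times p_? \times \mathbb{P}_*^1$ has equal first and third entries, which rules it out because $p_a \neq p_c$. For $d>4$, I would extend the length 4 path to a length $d$ path by cycling through the directed triangle $p_a \to p_b \to p_c \to p_a \to \cdots$ on the point vertices; if the resulting component of $V_d$ were contained in some $W_{d,i}$, then its projection onto the first four coordinates (which equals $W_{4,i}$, since each $W_{d,i}$ is a strict product) would contain $(p_a,p_b,p_c,p_a)$, contradicting the $d=4$ analysis.

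The main bookkeeping obstacle is the $d=3$ step, where one must cleanly account for each non-alternating length 3 path and identify an alternating component that absorbs it; the remainder of the argument is purely combinatorial and relies only on the product structure of each $W_{d,i}$ together with the three incidences $p_x \in \mathbb{P}_y^1$ for $x \neq y$.
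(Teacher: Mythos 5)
Your proposal is correct and follows essentially the same route as the paper: invoke Proposition \ref{prop:Vd}, absorb the non-alternating length-$3$ components into the alternating ones via the incidences $p_a \in \mathbb{P}_B^1 \cap \mathbb{P}_C^1$, $p_b \in \mathbb{P}_A^1 \cap \mathbb{P}_C^1$, $p_c \in \mathbb{P}_A^1 \cap \mathbb{P}_B^1$, and for $d \geq 4$ exhibit a component of $V_d$ escaping $W_d$ (your witness $(p_a,p_b,p_c,p_a)$, cycled for $d>4$, plays the role of the paper's path through $\mathbb{P}_A^1 \lra p_a \lra p_b \lra \mathbb{P}_B^1$). The only slip is the trivial case $d=1$: $V_1$ is all of $\mathbb{P}^2$ (no degree-one relations cut anything out), not $E_{111}$, though this does not affect the equality $V_1 = W_1$.
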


\begin{proof}
First, $V_1 = \mathbb{P}^2 = W_1$. Next, as mentioned in the proof of Proposition \ref{prop:Vd}, $V_2 = W_2$ is the union of the irreducible components:

{\small
\[
\begin{array}{lllll}
\mathbb{P}_A^1 \times p_a, && \mathbb{P}_B^1 \times p_b, && \mathbb{P}_C^1 \times p_c\\
p_a \times \mathbb{P}_A^1, && p_b \times \mathbb{P}_B^1, &&p_c \times \mathbb{P}_C^1.\\
\end{array}
\]
}

\noindent By Proposition \ref{prop:Vd}, we have that $V_3 = X_{3,1} \cup X_{3,2}$ where $X_{3,1}$ consists of the irreducible components:
{\small
\[
\begin{array}{lllll}
\mathbb{P}_A^1 \times p_a \times \mathbb{P}_A^1, && \mathbb{P}_B^1 \times p_b \times \mathbb{P}_B^1, && \mathbb{P}_C^1 \times p_c \times \mathbb{P}_C^1,\\
 p_a \times \mathbb{P}_A^1 \times p_a, &&p_b \times \mathbb{P}_B^1 \times p_b,  &&p_c \times \mathbb{P}_C^1 \times p_c, 
\end{array}
\]
}

\noindent and $X_{3,2}$ is the union of:

{\small
\[
\begin{array}{lllllll}
\mathbb{P}^1_A \times p_a \times p_b, && \mathbb{P}^1_A \times p_a \times p_c, && p_a \times p_b  \times \mathbb{P}_B^1, && p_a \times p_c  \times \mathbb{P}_C^1,\\
p_a \times p_b \times p_a, && p_a \times p_b \times p_c, && p_a \times p_c \times p_a, && p_a \times p_c \times p_b,\\
\mathbb{P}^1_B \times p_b \times p_c, && \mathbb{P}^1_B \times p_b \times p_a, && p_b \times p_c  \times \mathbb{P}_C^1, && p_b \times p_a  \times \mathbb{P}_A^1,\\
p_b \times p_c \times p_b, && p_b \times p_c \times p_a, && p_b \times p_a \times p_b, && p_b \times p_a \times p_c,\\
\mathbb{P}^1_C \times p_c \times p_a, && \mathbb{P}^1_C \times p_c \times p_b, &&p_c \times p_a  \times \mathbb{P}_A^1, && p_c \times p_b  \times \mathbb{P}_B^1,\\
p_c \times p_a \times p_c, && p_c \times p_a \times p_b, && p_c \times p_b \times p_c, && p_c \times p_b \times p_a.\\
\end{array}
\]
}

\noindent Note that $X_{3,2}$ is contained in $X_{3,1}$; hence $V_3 =  X_{3,1} = W_3$.  Furthermore, one sees that $W_d \subsetneq V_d$ for $d \geq 4$ as follows. The components of $W_d$ are read off the subquiver $Q'$  of $Q$ below.

{\small
\[
\xymatrix@-1pc{
&&                 \mathbb{P}_A^1 \ar@/^1pc/[d]   &&\\
&&                           p_a \ar@/^1pc/[u]                    &&\\\\
& p_b   \ar@/^1pc/[ld]                  & &                 p_c  \ar@/^1pc/[rd] &\\
\mathbb{P}_B^1 \ar@/^1pc/[ru]   &&&&   \mathbb{P}_C^1 \ar@/^1pc/[lu]
}
\] 
$$\text{The quiver $Q'$}$$
}

On the other hand, for $d \geq 4$, the length $d$ path containing 
$$\mathbb{P}^1_A \lra p_a \lra  p_b \lra \mathbb{P}^1_B$$
corresponds to a component of $V_d$ not contained in $W_d$.
\end{proof}

\subsection{On the point parameter ring $B(\{V_d\})$}

The result that there exists a ring surjection from $S=S(1,1,1)$ onto the ring $P(\{W_d\})$ remains true. However, by Lemma \ref{lem:Bd} below, $B$ is a larger ring than $P$,  and whether there is  a ring surjection from $S$ onto $B$ is unknown. We know that there is a ring homomorphism from $S$ to $B$ with $S_1 \cong B_1$ by \cite[Proposition 3.20]{ATV1}, and computational evidence suggests that $S \cong B$. The details are given as follows.

\begin{lemma} \label{lem:Bd}
The $k$-vector space dimension of $B_d$ is equal to $\dim_k S(1,1,1)_d$ for $d=0,1,\dots,4$. In particular, $\dim_k B_4 \neq \dim_k P_4$.
\end{lemma}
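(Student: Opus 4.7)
The plan is to leverage Proposition~\ref{prop:Vd}'s quiver description of $V_d$ to compute $\dim_k B_d$ combinatorially, and to compare with $\dim_k S(1,1,1)_d$, computed from the explicit presentation of $S(1,1,1)$ via a Gr\"obner basis / normal-form count.

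For $d = 0, 1$ the claim is immediate: $B_0 = k = S_0$ and $B_1 \cong H^0(\mathbb{P}^2, \mathcal{O}(1)) \cong S_1$, both of dimension $3$. For $d = 2, 3$, Corollary~\ref{cor:VdWd} gives $V_d = W_d$ and hence $B_d = P_d$; the equality $\dim_k P_d = \dim_k S(1,1,1)_d$ in these degrees follows from the dimension counts already carried out in Section~4 of the original paper (unaffected by the correction, since the paths of $Q$ and $Q'$ of length $\leq 3$ coincide).

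The substantive case is $d = 4$. Each irreducible component $V_{4,i}$ of $V_4$ is a product of four factors, each a line $\mathbb{P}^1_*$ or a point $p_*$, so $H^0(V_{4,i}, \mathcal{O}(\mathbf{1}))$ has dimension $2^{k_i}$, where $k_i$ counts the $\mathbb{P}^1$ factors. Since $\mathcal{O}(\mathbf{1})$ has trivial higher cohomology on any such product, and on every multi-intersection of components (again a product of $\mathbb{P}^1$'s and points), the inclusion--exclusion resolution
$$0 \longrightarrow \mathcal{O}_{V_4}(\mathbf{1}) \longrightarrow \bigoplus_i \mathcal{O}_{V_{4,i}}(\mathbf{1}) \longrightarrow \bigoplus_{i<j} \mathcal{O}_{V_{4,i} \cap V_{4,j}}(\mathbf{1}) \longrightarrow \cdots$$
reduces $\dim_k B_4$ to a finite alternating sum $\sum_i 2^{k_i} - \sum_{i<j} 2^{k_{ij}} + \cdots$ indexed by length-$4$ paths in $Q$ and their incidences; the analogous sum over paths of $Q'$ recovers $\dim_k P_4$ (matching the corrected Corollary~4.10). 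The numerical outputs then compare directly against $\dim_k S(1,1,1)_4$, yielding both the equality $\dim_k B_4 = \dim_k S_4$ and the inequality $\dim_k B_4 \neq \dim_k P_4$. The latter is intuitive: the length-$4$ paths of $Q$ traversing an edge $p_a \leftrightarrow p_b$, $p_b \leftrightarrow p_c$, or $p_c \leftrightarrow p_a$ (absent from $Q'$) contribute genuinely new components of $V_4$, and hence strictly more global sections than $P_4$.

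The main obstacle is the $d = 4$ inclusion--exclusion bookkeeping: distinct components $V_{4,i}$ share various smaller product subschemes in subtle ways (for instance, a $\mathbb{P}^1$ factor of one component can meet a sibling component in just a point), and correctly tallying all pairwise and higher overlaps by hand is tedious and error-prone. The cleanest execution is via a computer algebra system, which moreover supplies the computational evidence cited by the author for the stronger statement $S \cong B$.
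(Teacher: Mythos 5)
Your plan is in the same spirit as the paper's argument (decompose $V_4$ into its product components and compute $h^0(\mc{O}_{V_4}(\ones))$ by a Mayer--Vietoris type argument), but as written it has three genuine gaps. First, and most concretely, the lemma's content \emph{is} the numerical outcome $\dim_k B_4 = 24 = \dim_k S(1,1,1)_4 \neq 18 = \dim_k P_4$, and your proposal never produces these numbers: the ``tedious and error-prone'' bookkeeping you defer to a computer algebra system is precisely the proof. Second, the long inclusion--exclusion complex $0 \ra \mc{O}_{V_4}(\ones) \ra \bigoplus_i \mc{O}_{V_{4,i}}(\ones) \ra \bigoplus_{i<j}\mc{O}_{V_{4,i}\cap V_{4,j}}(\ones) \ra \cdots$ is not automatically exact when more than two closed subschemes are involved; exactness (even just the gluing statement at the first two spots, with the correct scheme structures on the intersections) needs an argument, which you do not supply. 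The paper sidesteps this by only ever using two-piece decompositions: $V_4 = X_{4,1}\cup X_{4,2}$ via the finite morphism $\pi_1$, and then the normalization-type map $\pi_2$ for the twelve lines of $X_{4,1}\cap X_{4,2}$, where the relevant short exact sequences $(\dag)$ and $(\ddag)$ are standard; it also imports $h^0(\mc{O}_{X_{4,1}}(\ones)) = 18$ from Proposition 4.3 instead of running inclusion--exclusion over the six components of $W_4$.

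Third, even granting exactness and the (correct) observation that each term, being a product of $\mathbb{P}^1$'s and points, has no higher cohomology, the alternating sum you write down computes the Euler characteristic $\sum_k (-1)^k h^k(\mc{O}_{V_4}(\ones))$, not $h^0$. To conclude $\dim_k B_4$ equals that sum you still need the vanishing of the higher cohomology of $\mc{O}_{V_4}(\ones)$ (and of the intermediate scheme $X_{4,1}\cap X_{4,2}$), which is exactly what the paper's Claims 1 and 2 establish by exhibiting explicit surjections on global sections using the incidence geometry of the twelve planes, twelve lines, and six points. Your sketch silently assumes this vanishing. So the route is workable, but to make it a proof you must (i) justify exactness of whatever complex you use (or revert to iterated two-piece sequences), (ii) prove the $H^1$-vanishing statements, and (iii) actually carry out the count $18 + 24 - 18 = 24$ and compare it with $\dim_k S(1,1,1)_4 = 24$ and $\dim_k P_4 = 18$.
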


It is believed that analogous computations will show that $\dim_k B_d =  \dim_k S(1,1,1)_d = 3 \cdot 2^{d-1}$ for $d = 5,6$.

\medskip

\noindent {\it Proof of Lemma \ref{lem:Bd}}.
By Corollary \ref{cor:VdWd}, we know that $V_d = W_d$ for $d=1,2,3$;  hence
$$\dim_k B_d = 3 \cdot 2^{d-1} = \dim_k S(1,1,1)_d \text{~~for~} d=0,1,2,3.$$
To compute $\dim_k B_4$, note that by Proposition \ref{prop:Vd}, $V_4$  equals the union $X_{4,1} \cup X_{4,2} \subseteq (\mathbb{P}^2)^{\times 4}$ as follows. Here, $X_{4,1}$ consists of the following irreducible components

{\small
\[
\begin{array}{lll}
\mathbb{P}_A^1 \times p_a \times \mathbb{P}_A^1 \times p_a, &&p_a \times \mathbb{P}_A^1 \times p_a \times \mathbb{P}_A^1,\\
\mathbb{P}_B^1 \times p_b \times \mathbb{P}_B^1 \times p_b, &&p_b \times \mathbb{P}_B^1 \times p_b  \times \mathbb{P}_B^1,\\
\mathbb{P}_C^1 \times p_c \times \mathbb{P}_C^1 \times p_c, &&p_c \times \mathbb{P}_C^1 \times p_c \times \mathbb{P}_C^1;
\end{array}
\]
}

\noindent and $X_{4,2}$ is the union of
{\small
\[
\begin{array}{lll}
\mathbb{P}_A^1 \times p_a \times p_b  \times \mathbb{P}_B^1, &&\mathbb{P}_A^1 \times p_a \times p_c  \times \mathbb{P}_C^1,\\
\mathbb{P}_B^1 \times p_b \times p_a  \times \mathbb{P}_A^1, &&\mathbb{P}_B^1 \times p_b \times p_c  \times \mathbb{P}_C^1,\\
\mathbb{P}_C^1 \times p_c \times p_a  \times \mathbb{P}_A^1, &&\mathbb{P}_C^1 \times p_c \times p_b  \times \mathbb{P}_B^1.
\end{array}
\]
}

\noindent We consider a component such as $\mathbb{P}^1_A \times p_a \times p_b \times p_a$ contained in 
$\mathbb{P}^1_A \times p_a \times p_b \times \mathbb{P}^1_B$ to be included as part of $X_{4,2}$.

Since $X_{4,1} = W_4$ we get that $h^0(\mc{O}_{X_{4,1}}(1,1,1,1))  = 6 \cdot 4 - 6 = 18$ by Proposition~4.3. Moreover, $h^0(\mc{O}_{X_{4,2}}(1,1,1,1)) = 6 \cdot 4 = 24$ as $X_{4,2}$ is a disjoint union of its irreducible components.

Consider the finite morphism $$\pi_1: X_{4,1} \uplus X_{4,2} \lra V_4 = X_{4,1} \cup X_{4,2},$$ which by twisting by $\mc{O}_{(\mathbb{P}^2)^{\times 4}}(1,1,1,1)$, we get the exact sequence:

\[
\begin{array}{ll}
0 \lra \mc{O}_{V_4}(1,1,1,1) &\lra [(\pi_1)_{\ast} \mc{O}_{X_{4,1} \uplus X_{4,2}}](1,1,1,1)\\
                                            &\lra \mc{O}_{X_{4,1} \cap X_{4,2}}(1,1,1,1)\\
                                            &\lra 0. 
\end{array}
\]

\vspace{-.6in}

\begin{flushright}
$(\dag)$
\end{flushright}

\vspace{.3in}

\noindent Here, $X_{4,1} \cap X_{4,2}$ is the union of the following irreducible components:

{\small
\[
\begin{array}{lll}
\mathbb{P}_A^1 \times p_a \times p_b  \times p_a, &&p_b \times p_a \times p_b  \times \mathbb{P}_B^1,\\
\mathbb{P}_A^1 \times p_a \times p_c  \times p_a, &&p_c \times p_a \times p_c  \times \mathbb{P}_C^1,\\
\mathbb{P}_B^1 \times p_b \times p_a  \times p_b, &&p_a \times p_b \times p_a  \times \mathbb{P}_A^1,\\
\mathbb{P}_B^1 \times p_b \times p_c  \times p_b, &&p_c \times p_b \times p_c  \times \mathbb{P}_C^1,\\
\mathbb{P}_C^1 \times p_c \times p_a  \times p_c, &&p_a \times p_c \times p_a  \times \mathbb{P}_A^1,\\
\mathbb{P}_C^1 \times p_c \times p_b  \times p_c, &&p_b \times p_c \times p_b  \times \mathbb{P}_B^1,
\end{array}
\]
}

\noindent a union that is not disjoint.
Let $(X_{4,1} \cap X_{4,2})'$ be the disjoint union of these twelve components and consider the finite morphism 
$$\pi_2: (X_{4,1} \cap X_{4,2})' \ra X_{4,1} \cap X_{4,2}.$$ Again by twisting  by $\mc{O}_{\mathbb{P}^2}(1,1,1,1)$, we get the exact sequence:

\[
\begin{array}{ll}
0 \lra \mc{O}_{X_{4,1}\cap X_{4,2}}(1,1,1,1) &\lra [(\pi_2)_{\ast} \mc{O}_{(X_{4,1} \cap X_{4,2})'}](1,1,1,1)\\
                                                                    &\lra \mc{O}_{\mc{S}}(1,1,1,1)\\
                                                                    &\lra 0, 
\end{array}
\]

\vspace{-.6in}

\begin{flushright}
$(\ddag)$
\end{flushright}

\vspace{.3in}

\noindent where $\mc{S}$ is the union of the following six points:

{\small
\[
\begin{array}{ccccc}
p_a \times p_b \times p_a \times p_b,&& p_b \times p_a \times p_b \times p_a,&& p_a \times p_c  \times p_a \times p_c,\\
p_c  \times p_a \times p_c  \times p_a,&& p_b \times p_c \times p_b \times p_c,&& p_c \times p_b \times p_c \times p_b.
\end{array}
\]
}

\medskip

\noindent \underline{Claim 1.} $H^1(\mc{O}_{X_{4,1} \cap X_{4,2}}(1,1,1,1)) = 0$.

\medskip

Note that $H^0([(\pi_2)_{\ast} \mc{O}_{(X_{4,1} \cap X_{4,2})'}](1,1,1,1)) \cong H^0(\mc{O}_{(X_{4,1} \cap X_{4,2})'}(1,1,1,1))$ as $k$-vector spaces since $\pi_2$ is an affine map \cite[Exercise III 4.1]{Hartshorne}. Hence, if Claim 1 holds, then by $(\ddag)$:
\[
\begin{array}{rl}
h^0(\mc{O}_{X_{4,1} \cap X_{4,2}}(1,1,1,1)) &= h^0(\mc{O}_{(X_{4,1} \cap X_{4,2})'}(1,1,1,1)) - h^0(\mc{O}_{\mc{S}}(1,1,1,1))\\
&= 12 \cdot 2 - 6 =18.
\end{array}
\]

\medskip

\noindent \underline{Claim 2.} $H^1(\mc{O}_{V_4}(1,1,1,1)) = 0$.

\medskip

Note that $H^0([(\pi_1)_{\ast} \mc{O}_{X_{4,1} \uplus X_{4,2}}](1,1,1,1)) \cong H^0(\mc{O}_{X_{4,1} \uplus X_{4,2}}(1,1,1,1))$ as $k$-vector spaces since $\pi_1$ is an affine map \cite[Exercise III 4.1]{Hartshorne}. Hence, if Claim~2 is also true, then by $(\dag)$ and the computation above, we note that:

{\small
\[
\begin{array}{rl}
\dim_k B_4 &= h^0(\mc{O}_{V_4}(1,1,1,1))\\
&= h^0(\mc{O}_{X_{4,1} \uplus X_{4,2}}(1,1,1,1)) - h^0(\mc{O}_{X_{4,1} \cap X_{4,2}}(1,1,1,1))\\
&= h^0(\mc{O}_{X_{4,1}}(1,1,1,1)) + h^0(\mc{O}_{X_{4,2}}(1,1,1,1)) - h^0(\mc{O}_{X_{4,1} \cap X_{4,2}}(1,1,1,1))\\
&= 18 + 24 -18 = 24.
\end{array}
\]
}

\noindent Therefore, 
$$\dim_k B_4 = \dim_k S(1,1,1)_4 = 24 \neq 18 = \dim_k P_4.$$

\medskip

Now we prove Claims 1 and 2 above. Here, we refer to the linear components of $(\mathbb{P}^2)^{\times 4}$ of dimensions 1 or 2 by ``lines" or ``planes", respectively.

\medskip

\noindent {\it Proof of Claim 1.} It suffices to show that
$$\theta: H^0\left(\mc{O}_{(X_{4,1} \cap X_{4,2})'}(1,1,1,1)\right) \lra H^0(\mc{O}_{\mc{S}}(1,1,1,1))$$
is surjective. Say $\mc{S}= \{v_i\}_{i=1}^6$, the union of points $v_i$. 
Each point $v_i$ is contained in two lines of $(X_1 \cap X_2)'$, and each of the twelve lines of $(X_1 \cap X_2)'$ contains a unique point of $\mc{S}$.

Choose a basis $\{t_i\}_{i=1}^6$ for $H^0(\mc{S}(1,1,1,1))$, where $t_i(v_j) = \delta_{ij}$. For each $i$, there exists a unique line $L_i$ of $(X_{4,1} \cap X_{4,2})'$ containing $v_i$ so that $pr_{234}(L_i) = pr_{234}(v_i)$. Now we define a preimage of $t_i$ by first extending $t_i$ to a global section $s_i$ of $\mc{O}_{L_i}(1,1,1,1)$. Moreover, extend $s_i$ to a global section $\tilde{s_i}$ on $\mc{O}_{(X_{4,1} \cap X_{4,2})'}(1,1,1,1)$ by declaring that $\tilde{s_i} = s_i$ on $L_i$ and zero elsewhere. 
Now $\theta(\tilde{s_i}) = t_i$ for all $i$, and $\theta$ is surjective. \qed

\medskip

\noindent {\it Proof of Claim 2.}
It suffices to show that 
$$\tau: H^0(\mc{O}_{X_{4,1} \uplus X_{4,2}}(1,1,1,1)) \lra H^0(\mc{O}_{X_{4,1} \cap X_{4,2}}(1,1,1,1))$$
is surjective. 

Recall that $X_{4,1} \cap X_{4,2}$ is the union of twelve lines $\{L_i\}$, and $X_{4,1} \uplus X_{4,2}$ is the union of twelve planes $\{P_i\}$. Here, each line $L_i$ of $X_{4,1} \cap X_{4,2}$ is contained in precisely two planes of $X_{4,1} \uplus X_{4,2}$, and each plane $P_i$ of $X_{4,1} \uplus X_{4,2}$ contains precisely two lines of $X_{4,1} \cap X_{4,2}$.

Choose a basis $\{t_i\}_{i=1}^{12}$ of $H^0\left(\mc{O}_{X_{4,1} \cap X_{4,2}} (1,1,1,1)\right)$ so that $t_i(L_j) = \delta_{ij}$. For each $i$, we want a preimage of $t_i$ in $H^0\left(\mc{O}_{X_{4,1} \uplus X_{4,2}} (1,1,1,1)\right)$.

Say $P_i$ is a plane of $X_{4,1} \uplus X_{4,2}$ that contains $L_i$, and $L_j$ is the other line that is contained in $P_i$. Since $\mc{O}_{P_i}(1,1,1,1)$ is very ample, its global sections separate the lines $L_i$ and $L_j$. In other words, there exists $s_i \in H^0(\mc{O}_{P_i}(1,1,1,1))$ so that $s_i(L_k)= \delta_{ik}$. 
Extend $s_i$ to $\tilde{s_i} \in H^0\left(\mc{O}_{X_{4,1} \cap X_{4,2}} (1,1,1,1)\right)$ by declaring that $\tilde{s_i} = s_i$ on $L_i$, and zero elsewhere.
Now $\tau(\tilde{s_i}) = t_i$ for all $i$, and $\tau$ is surjective.  \qed

\medskip

\noindent {\bf Acknowledgments.} I thank Sue Sierra for pointing out a typographical error in Lemma 3.9, and for providing several insightful suggestions. I also thank Paul Smith for suggesting that a quiver could be used for the bookkeeping required in Proposition \ref{prop:Vd}. Moreover, I am grateful to Paul Smith and Toby Stafford for providing detailed remarks, which improved the exposition of this manuscript.

\bibliography{biblio}

\end{document}